\renewcommand{\leq}{\leqslant}
\renewcommand{\geq}{\geqslant}
\newtheorem{theorem}{Theorem}[section]
\newtheorem{proposition}[theorem]{Proposition}
\newtheorem{corollary}[theorem]{Corollary}
\newtheorem{lemma}[theorem]{Lemma}
\newtheorem{remark}[theorem]{Remark}
\theoremstyle{definition}
\newtheorem{definition}[theorem]{Definition}
\title{Logarithmic bounds on Fujita's conjecture}
\author{Luca Ghidelli} \author{Justin Lacini}
\date{}
\address{Mathematisches Institut, Georg-August-Universit\"at G\"ottingen, Bunsenstr. 3-5, D-37073 G\"ottingen, Germany}
\email{lghidel@mathematik.uni-goettingen.de}
\address{Department of Mathematics, University of Kansas, 1450 Jayhawk Blvd. , Lawrence, KS 66045, USA}
\email{jlacini@ku.edu}
\begin{document}

\begin{abstract}
Let $X$ be a smooth complex projective variety of dimension $n$. We prove bounds on Fujita's basepoint freeness conjecture that grow as $n\operatorname{log}\operatorname{log}(n)$. 
\end{abstract}

\maketitle

\section{Introduction}

The purpose of this paper is to prove the following result:

\begin{theorem}\label{intro1}
Let $X$ be a smooth projective variety of dimension $n$ defined over an algebraically closed field of characteristic zero and let $L$ be an ample line bundle on $X$. 
Then $K_X + mL$ is basepoint free for any positive integer $m\geqslant \operatorname{max}\{n+1, n(\operatorname{log}\operatorname{log}(n)+2.34)\}$. 
\end{theorem}

A conjecture of Fujita \cite{fujita1} states that, in the hypothesis of Theorem \ref{intro1}, $K_X+mL$ is basepoint free for all $m\geqslant n+1$. Since maps to projective space are one
of the main tools used in the study of projective varieties, Fujita's conjecture has received considerable attention. 
Reider proved the conjecture for surfaces in \cite{reider} shorty after its formulation by using Bogomolov's instability theorem for rank two vector bundles.
Ein and Lazarsfeld proved it for threefolds in \cite{einlazarsfeld1} by introducing techniques from the Minimal Model Program.
Later, Kawamata proved the conjecture for fourfolds in \cite{kawamata1} and Ye and Zhu recently proved it for fivefolds in \cite{yezhu1} and \cite{yezhu2}.

For sixfolds, we prove the following:

\begin{theorem}\label{intro2}
Let $X$ be a smooth projective variety of dimension six defined over an algebraically closed field of characteristic zero and let $L$ be an ample line bundle on $X$. 
Then $K_X+mL$ is basepoint free for any positive integer $m\geqslant 8$. 
\end{theorem}

While these sporadic cases may be considered as evidence for the conjecture to hold true in general, in higher dimensions much less is known. 
The first general result is due to Angehrn and Siu, who used techniques of analytic algebraic geometry to prove that $K_X+mL$ is basepoint free for all $m\geqslant (n^2+n+2)/2$ in \cite{angehrnsiu}. 
Koll\'{a}r adapted their proof to the algebraic setting in \cite{pairs}. By using a different idea, later Helmke \cite{helmke1} 
also established a general method that essentially leads to a quadratic bound. Heier \cite{heier} 
combined Angehrn-Siu's approach and Helmke's approach to give a bound that is $O(n^{4/3})$. 

Once one knows that the linear series $|K_X+mL|$ gives a morphism to projective space, some questions naturally arise. For instance, it is interesting
to know if the morphism is birational. More in general, we say that $K_X+mL$ separates $r$ points if the restriction morphism
\[
H^0(X, K_X+mL)\rightarrow H^0(T, (K_X+mL)|_T)
\]
is surjective for any reduced subscheme $T$ of length $r$. Naturally, $K_X+mL$ separates two points if and only if
the morphism defined by $|K_X+mL|$ is birational. Angehrn and Siu \cite{angehrnsiu} (see also \cite{pairs}) showed that
$K_X+mL$ separates $r$ points for $m\geqslant (n^2 +2rn-n+2)/2$.
In this direction we prove:

\begin{theorem}\label{intro3}
Let $X$ be a smooth projective variety of dimension $n$ defined over an algebraically closed field of characteristic zero and let $L$ be an ample line bundle.
Then $K_X+mL$ separates $r$ points for any positive integer $m\geqslant r+n-1+\sqrt r \, n (\log \log n + 2.34)$.
\end{theorem}

Let us now briefly explain the ideas behind the proofs of Theorems \ref{intro1}-\ref{intro3}.
Let $x$ be a point in a smooth projective variety $X$ and let $L$ be an ample line bundle. Suppose that we wish to find a section of $H^0(X,K_X+L)$ that does not vanish at $x$.
If $L^n$ is large enough,  a well established method in birational geometry is to find a divisor $D\in |L|_{\mathbb{Q}}$ that has large order of vanishing at 
$x$. In this way the pair $(X,D)$ has a non Kawamata log terminal center $Z$ containing $x$, and then one tries to use vanishing theorems to lift sections of $L|_Z$. 
If $Z$ is zero dimensional this is easily done, but $Z$ may very well be higher dimensional. Therefore one needs to cut down $Z$ in dimension.
At this stage, two approaches are possible. Helmke's approach is to insist in finding a divisor $D'\in |L|_Z|_{\mathbb{Q}}$ with large order of vanishing at $x$ 
and then lifting $D'$ to $X$. Finding such
$D'$ is now harder than it was finding $D$, as $Z$ may be singular at $x$. However, Helmke proved that if $Z$ is a log canonical center of dimension $d$, then
$\operatorname{mult}_x Z\leqslant \binom{n-1}{n-d}$.
By cutting down one step at a time, one eventually gets a zero dimensional log canonical center. Angehrn and Siu's method is instead to find a divisor $D'\in |L|_Z|_{\mathbb{Q}}$ 
highly singular at a smooth point $y$ near $x$, and then take the limit as $y$ approaches $x$. As we mentioned earlier, both methods give a quadratic bound on $m$.

We will follow Helmke's approach. The crucial new ingredient, however, is to consider all steps simultaneously rather than one at a time. By doing so, we rephrase the problem
of bounding $m$ into an optimization problem of a linear function on a compact convex polyhedron.
This approach allows us to estimate very efficiently the maximum
of the linear function, as it suffices to evaluate it at the vertices of the polyhedron.
We illustrate this idea in the simple case of dimension two. Let $X$ be a smooth surface, let $L$ be an ample line bundle and fix a point $x\in X$. Let
$D\in|L|_{\mathbb{Q}}$ be a $\mathbb{Q}$-divisor such that $\operatorname{ord}_x D\geqslant 1$ and consider the log canonical threshold:
\[
t_1 = \operatorname{sup}\{c|(X, cD) \text{ is log canonical at $x$}\}
\]
Clearly 
\[
0\leqslant t_1\leqslant 2\tag{1}
\]
Let $D_1 = t_1 D$ and let $Z$ be the minimal log canonical center of $(X, D_1)$ at $x$. If $Z$ is zero dimensional, then we are done. If not, then $Z$ is a curve
which is smooth at $x$ by inversion of adjunction. At this point we introduce the following important definition, due to Helmke (see also \cite{ein}).

\begin{definition}
Let $(X,\Delta)$ be a log pair. Let $n$ be the dimension of $X$ and let $x$ be a smooth point in $X$. 
Let $\pi: Y \rightarrow X$ be the blowing up of $X$ at $x$ with exceptional divisor $E$. 
The local discrepancy $b_x(X,\Delta)$ of $(X,\Delta)$ over $x$ is:
\[
 \inf\{b\mid \text{There is a non klt center of $(Y, \pi^* \Delta - (n-1-b)E)$ in $E$}\}
\]
\end{definition}

In our example, let $b_1 = b_x(X, D_1)$. It is then easy to show that
\[
0\leqslant b_1 \leqslant 2- t_1\tag{2}
\]

Also, since $Z$ is one dimensional, we have
\[
0\leqslant b_1\leqslant 1\tag{3}
\]

Next, we want to cut down $Z$. Let $D'\in |L|_Z|_{\mathbb{Q}}$ be a divisor such that $\operatorname{ord}_x D' \geqslant 1$ and let $D''$ be a general lifting of $D'$ to $X$.
Finally, let
\[
t_2 = \operatorname{sup}\{c\mid (X, D_1 + cD'') \text{ is log canonical at $x$}\}
\]
and set $D_2 = D_1 + t_2 D''$.
It is again easy to see that
\[
0\leqslant t_2 \leqslant b_1 \tag{4}
\]

Notice that $x$ is a log canonical center of $(X,D_2)$ and that $D_2\sim_{\mathbb{Q}} (t_1+t_2)L$. It is then a standard argument to deduce that 
$K_X+mL$ has a section that does not vanish at $x$ for $m\geqslant \lceil t_1+t_2+\epsilon \rceil$, so now the problem is to bound $t_1+t_2$.
Consider the set $C\subseteq \mathbb{R}^3$ consisting of points $(t_1, t_2, b_1)$ satisfying conditions $(1)-(4)$ above. 
Then $C$ is the convex hull of the points $(0,0,0)$, $(0,0,1)$, $(2,0,0)$, $(1,0,1)$, $(1,1,1)$, $(0,1,1)$.
Therefore, for any point of $C$ we have $t_1+t_2\leqslant 2$, so that $K_X+mL$ is basepoint free for $m\geqslant 3$.

This same idea applies to higher dimensions. However, the situation is considerably more complicated due to the presence of singularities of $Z$ and due to the fact that the geometry
of $C$ becomes increasingly complex. In order to deal with the problem more efficiently, in higher dimensions we do not compute all the vertices of $C$, but only those for which $\sum_i t_i$ is large.
In the above example, this amounts to noticing that one may rewrite $(2)$ as 
\[
t_1 \leqslant 2-b_1 
\]

By combining this with $(4)$, we get
\[
t_1 + t_2 \leqslant (2-b_1) + b_1\leqslant 2  
\]

The generalization of the expression $(2-b_1)+b_1$ to higher dimensions is the function $f(\underline{b}, \underline{d}, n, 1)$ of Section \ref{sectionoptimization}.
Much of the work of the paper is devoted to carefully estimating $f$ in terms of $n$ only, which then leads to the result. We would like to give here an idea on how this is done at least in the case when $X$ is a threefold.
First, define $D_1$, $b_1$ and $t_1$ as above and suppose that $\operatorname{LLC}(X, D_1, x)=\{Z_1\}$, where $Z_1$ is an irreducible surface. By Theorem \ref{multiplicity} we have that
\[
m_1 = \operatorname{mult}_x Z_1 \leqslant 3-\lceil b_1 \rceil 
\]

For the next step, choose a $\mathbb{Q}$-divisor $D'\in |L|_{Z_1}|_{\mathbb{Q}}$ with large order of vanishing at $x$. Notice that, differently than above,
the best bound we may hope for is $\operatorname{ord}_x D' \geqslant \frac{1}{\sqrt{m_1}}$, due to the fact that $Z_1$ is possibly singular (see Definition \ref{multdef} for 
the definition of order of vanishing in this context). Let $D''$ be a general lifting of $D'$ to $X$, let 
\[
t_2 = \operatorname{sup}\{c\mid (X, D_1 + cD'') \text{ is log canonical at $x$}\}
\]
and set $D_2 = D_1 + t_2 D''$. If we define $b_2 = b_x(X, D_2)$, then by an argument due to Helmke (see also Theorem \ref{cutlc}) we have that
\[
b_2 \leqslant b_1 - t_2 \cdot \operatorname{ord}_x D' \leqslant b_1 - \frac{t_2}{\sqrt{m_1}}
\]
Now suppose that $\operatorname{LLC}(X, D_2, x)=\{Z_2\}$, where $Z_2$ is an irreducible curve. 
Then we have that $Z_2$ is smooth near $x$ by inversion of adjunction. 
Let $D'\in |L|_{Z_2}|_{\mathbb{Q}}$ be a divisor such that $\operatorname{ord}_x D' \geqslant 1$ and let $D''$ be a general lifting of $D'$ to $X$.
Finally, let
\[
t_3= \operatorname{sup}\{c\mid (X, D_2 + cD'') \text{ is log canonical at $x$}\}
\]
and set $D_3 = D_2 + t_3 D''$.
We have that $t_3 \leqslant b_2$ and that $x$ is a log canonical center of $(X,D_3)$. Putting everything together, we get
\begin{equation*}
t_1 + t_2 + t_3\leqslant 3-b_1 + (b_1-b_2)\sqrt{3-\lceil b_1 \rceil} + b_2
\end{equation*}

Notice that $b_1 \leqslant \operatorname{dim}(Z_1) = 2$. Therefore, $\lceil b_1 \rceil \neq 3$ and the above expression does not decrease if we decrease $b_2$. In particular, we may assume $b_2=0$.
But then we get
\[
t_1 + t_2 + t_3 \leqslant 3 - b_1 + b_1\sqrt{3-\lceil b_1 \rceil}\leqslant 2+\sqrt{2}<4
\]
This proves Fujita's basepoint freeness conjecture in dimension three, at least in the case when the log canonical centers constructed inductively have dimension two and one respectively (the other cases being entirely
analogous). 
As $n$ grows larger, however, bounding $f(\underline{b}, \underline{d}, n, 1)$ becomes increasingly difficult. For example, if $X$ is a fourfold and if there are four steps in the inductive process, the upper
bound on $\sum_i t_i$ is
\[
4-b_1 + (b_1-b_2)\sqrt[3]{4-\lceil b_1 \rceil} + (b_2-b_3)\sqrt{\binom{4-\lceil b_2 \rceil}{2}}+ b_3
\]

We refer to Section \ref{sectionoptimization} for the details of the estimates on $f(\underline{b}, \underline{d}, n, r)$ and we refer to Appendix \ref{appendix} for a proof of their optimality. 

\ \

\noindent \textbf{Acknowledgements:} We would like to thank Prof. James M\textsuperscript{c}Kernan and Prof. Bangere Purnaprajna for their encouragement and for helpful comments on the paper. We would also like to thank Fei Ye and Zhixian Zhu for interesting discussions on Section \ref{sectionsixfolds}.
The research of LG is supported by the Alexander von Humboldt Research Fellowship for Postdoctoral Researchers.

\section{Preliminaries}

\subsection{Notation}

We work over an algebraically closed field $k$ of characteristic zero.  
Most of the following notation is standard. 
$\mathbb{N}$ is the set of natural numbers, zero included. We denote the logarithmic function with natural base as $\operatorname{log}:\mathbb{R}^+ \rightarrow \mathbb{R}$. We denote by $W\colon \mathbb R_{\geq 0} \to \mathbb R_{\geq 0}$ the principal branch of Lambert's productlog function (see Definition \ref{definitionW}). 
A $\mathbb{Q}$-Cartier divisor $D$ on a normal variety $X$ is nef if $D\cdot C\geqslant 0$ for any curve $C\subseteq X$.
We use the symbol $\sim_\mathbb{Q}$ to indicate $\mathbb{Q}$-linear equivalence and the symbol $\equiv$ to indicate numerical equivalence.
We denote by $|D|_\mathbb{Q}$ the $\mathbb{Q}$-linear series of a $\mathbb{Q}$-Cartier divisor $D$.
A pair $(X,\Delta)$ consists of a normal variety $X$ and a $\mathbb{Q}$-Weil divisor $\Delta$ such that $K_X + \Delta$ is $\mathbb{Q}$-Cartier.
If $\Delta\geqslant 0$, we say $(X,\Delta)$ is a log pair. If $f:Y \rightarrow X$ is a birational morphism, we may write
$K_Y + f^{-1}_* \Delta = f^*(K_X+ \Delta) + \sum_i a_i E_i$ with $E_i$ $f$-exceptional divisors. A log pair $(X,\Delta)$ is called log canonical (or lc) if 
$a_i\geqslant -1$ for every $i$ and for every $f$, and it's called Kawamata log terminal (or klt) if $a_i>-1$ for every $i$ and $f$, and furthermore 
$\lfloor\Delta \rfloor=0$. The rational numbers $a_i$ are called the discrepancies of $E_i$ with respect to $(X,\Delta)$ and do not depend on $f$.
We say that a subvariety $V\subseteq X$ is a non klt center if it is the image of a divisor of discrepancy at most $-1$. 
A non klt center $V$ is a log canonical center if $(X,\Delta)$ is log canonical at the generic point of $V$.
A non klt place (respectively log canonical place) is a valuation corresponding to a divisor of discrepancy at most (respectively equal to) $-1$. 
The set of all log canonical centers passing though $x\in X$ is denoted by $\operatorname{LLC}(X,\Delta,x)$,
and the union of all the non klt centers is denoted by $\operatorname{Nklt}(X,\Delta,x)$.
Finally, the log canonical threshold of $(X,\Delta)$ at a point $x$ is 
$\operatorname{lct}(X,\Delta,x)=\operatorname{sup}\{c>0| (X,c\Delta) \text{ is lc at $x$}\}$.

\subsection{Log canonical centers}

We recall here some standard definitions and results in birational geometry for the convenience of the reader. 

\begin{definition}
Let $X$ be an irreducible projective variety of dimension $n$ and let $D$ be a $\mathbb{Q}$-Cartier divisor. 
Let $m$ be a positive integer such that $mD$ is Cartier. The volume of $D$ is:
\[
   \operatorname{vol}(X,D)=\limsup_{k\to\infty}\frac{n! h^0(X,kmD)}{(km)^n}.
\]
\end{definition}

\begin{definition}\label{multdef}
Let $X$ be an irreducible projective variety, let $x$ be a point of $X$ and let $D$ be a $\mathbb{Q}$-Cartier divisor on $X$. 
Let $m$ be a positive integer such that $mD$ is Cartier and let $f\in \mathcal{O}_{X,x}$ be a defining equation.
Then we define the order of vanishing of $D$ at $x$ as
\[
\operatorname{ord}_x D = \frac{1}{m}\operatorname{max}\{s\in \mathbb{N}| f\in \mathfrak{m}_x ^s\}
\]
\end{definition}

\begin{lemma}\label{volumesections}
Let $X$ be an irreducible projective variety of dimension $n$ and let $D$ be a $\mathbb{Q}$-Cartier divisor on $X$. 
Let $T$ be a finite set of points of $X$ of cardinality $r$.
Then there is a $\mathbb{Q}$-divisor $D'\in |D|_{\mathbb{Q}}$ such that 
\[
\operatorname{ord}_x D' \geqslant \Big( \frac{\operatorname{vol}(X, D)}{r\operatorname{mult}_x X} \Big)^{1/n}
\]
for all $x\in T$. 
\end{lemma}
\begin{proof}
See \cite[Proposition 3.2]{helmke1} or \cite[Proposition 2.1]{kawamata1}.
\end{proof}

\begin{definition}
Let $(X,\Delta)$ be a log pair with $X$ smooth, and let $\mu : Y\rightarrow X$ be a log resolution. We define the multiplier ideal sheaf of the pair $(X,\Delta)$ to be
\[
   \mathcal{I}(X,\Delta) = \mu _* \mathcal{O}_Y (K_{Y/X} - \lfloor \mu ^* \Delta \rfloor) \subseteq \mathcal{O}_X
\]
\end{definition}

We have that $(X,\Delta)$ is klt if and only if $\mathcal{I}(X,\Delta)=\mathcal{O}_X$, and
it is lc if and only if $\mathcal{I}(X, (1-\epsilon)\Delta)=\mathcal{O}_X$ for any $0<\epsilon\ll 1$. Therefore
$\operatorname{Nklt}(X,\Delta)=\operatorname{Supp}(\mathcal{O}_X / \mathcal{I}(X,\Delta))$. 

\begin{theorem}[Nadel vanishing theorem]\label{nadel}
Let $X$ be a smooth projective variety and $\Delta\geqslant 0$ a $\mathbb{Q}$-divisor on $X$. Let $D$ be any integral divisor such that $D-\Delta$ is big 
and nef. Then $H^i (X, \mathcal{O}_X (K_X + D)\otimes \mathcal{I}(X,\Delta))=0$ for $i>0$.
\end{theorem}
\begin{proof}
See \cite[Section 9.4.B]{lazarsfeld2}.
\end{proof}

\begin{proposition}\label{mult}
Let  $X$ be an irreducible variety of dimension $n$ and let $\Delta$ be an effective $\mathbb{Q}$-divisor on $X$. If $\operatorname{ord}_x \Delta\geqslant n$ at some
smooth point $x\in X$, then $\mathcal{I}(X,\Delta)_x\subseteq \mathfrak{m}_x$, where $\mathfrak{m}_x$ is the maximal ideal of $x$.
\end{proposition}
\begin{proof}
This is \cite[Proposition 9.3.2]{lazarsfeld2}.
\end{proof}

\begin{lemma}\label{mlc}
Let $(X,\Delta)$ be a log pair such that $\Delta$ is $\mathbb{Q}$-Cartier. Assume that $X$ is klt and $(X,\Delta)$ is lc. If $W_1$ and $W_2$ are log canonical centers of $(X,\Delta)$ 
and $W$ is an irreducible component of $W_1 \cap W_2$, then $W$ also is a log canonical center of $(X,\Delta)$. In particular, if $(X,\Delta)$ is not klt at $x\in X$,
then there exists the unique minimal element of $\operatorname{LLC}(X,\Delta,x)$.
\end{lemma}
\begin{proof}
See \cite[Proposition 1.5]{kawamata1}.
\end{proof}

We will refer to the following result as \say{tie breaking}. 

\begin{lemma}\label{tiebreak}
Let $(X,\Delta)$ be a log pair such that $X$ is klt and $\Delta$ is $\mathbb{Q}$-Cartier. Let $S$ be a finite set of points of $X$. Suppose 
that there is a point $x\in S$ such that $\{x\}\in \operatorname{LLC}(X,\Delta,x)$ and that for each point $y\in S\setminus\{x\}$
there is a non klt center of $(X,\Delta)$ containing $y$ but not $x$. Let $D$ be an ample 
$\mathbb{Q}$-Cartier divisor. Then there exists a positive rational number $a>0$ such that for any $0<\epsilon\ll 1$ there exists
a $\mathbb{Q}$-Cartier divisor $E\in |aD|_{\mathbb{Q}}$ such that
\begin{enumerate}
\item $(X,(1-\epsilon)\Delta + \epsilon E)$ is not klt at any point of $S$.
\item $(X,(1-\epsilon)\Delta + \epsilon E)$ is lc at $x$.
\item $\operatorname{LLC}(X,(1-\epsilon)\Delta + \epsilon E, x)=\{x\}$. 
\end{enumerate}
\end{lemma}
\begin{proof}
This is an analogue of \cite[Proposition 6.2]{helmke1}. 
For each $y\in S\setminus\{x\}$ let $v_y$ be a non klt place of $(X,\Delta)$ whose center $Z_y$ contains $y$ but not $x$.
Let $b$ be a positive rational number such that there exists $E_1\in|bD|_{\mathbb{Q}}$ with $v_y(E_1) >  v_y(\Delta)$
for all $y\in S\setminus\{x\}$.
After possibly taking a larger $b$ we may assume that the common support of all such $E_1$ is exactly the union of the $Z_y$.
Similarly, let $v_x$ be a log canonical place of $(X,\Delta)$ with center $x$ and
let $c$ be a positive rational number such that there exists $E_2\in |cD|_{\mathbb{Q}}$ with $v_x(E_2) > v_x(\Delta)$.
Again, after possibly taking a larger $c$ we may assume that the common support of all such $E_2$ is exactly $x$. Set $a=b+c$.
For general choices of $E_1$ and $E_2$ the pair $(X,(1-\epsilon)\Delta + \epsilon E_1 + \epsilon E_2)$ is not lc at any point of $S$ for any small $\epsilon$. Let 
\[
t=\operatorname{sup}\{d|(X,(1-\epsilon)\Delta + \epsilon E_1 + d\epsilon E_2) \text{ is lc at $x$}\}
\]
Clearly $t<1$. Finally, take $E_3\in |(1-t)c D|_{\mathbb{Q}}$ general enough. We have
\[
\operatorname{LLC}(X, (1-\epsilon)\Delta + \epsilon (E_1 + tE_2 + E_3))=\{x\}
\]
Furthermore, $S$ is contained in $\operatorname{Nklt}(X, (1-\epsilon)\Delta + \epsilon (E_1 + tE_2 + E_3))$
and $E_1 + tE_2 + E_3\in |aD|_{\mathbb{Q}}$. Therefore, we may take $E=E_1 + tE_2 + E_3$.
\end{proof}

\section{The inductive method}\label{sectioninduction}

In this section we describe an inductive method for cutting down the dimension of non klt centers. This essentially due to Helmke (see in particular \cite[Proposition 6.3]{helmke1}).
Since we will need Helmke's result in a slightly different form, we go over its proof and make the appropriate changes. 

\begin{proposition}\label{cutlc} 
Let $(X,\Delta)$ be a log pair, where $X$ is a smooth projective variety of dimension $n$. 
Let $S$ be a finite set of points contained in $\operatorname{Nklt}(X,\Delta)$ and let $r$ be the cardinality of $S$. Let $T$ be a nonempty subset of $S$ such that:
\begin{enumerate}
\item $(X,\Delta)$ is log canonical at all points of $T$.
\item All points of $T$ share a common minimal log canonical center $Z$. 
\item Every point in $S\setminus T$ is contained in a non klt center of $(X,\Delta)$ that does not contain any point of $T$. 
\end{enumerate}
Let $d=\operatorname{dim}Z$ and let $D$ be an ample $\mathbb{Q}$-divisor.
If $d>0$, then there exists a nonempty subset $T'$ of $T$, a rational number $t$ such that
\[
0\leqslant t \leqslant b_x(X,\Delta) \Big( \frac{r\operatorname{mult}_x Z}{D^d \cdot Z}\Big) ^{1/d} 
\]
for every point $x\in T$ and a $\mathbb{Q}$-divisor $D'\in |D|_{\mathbb{Q}}$ such that 
\begin{enumerate}
\item $(X,\Delta+tD')$ is log canonical at all points of $T'$.
\item All points of $T'$ share a common minimal log canonical center $Z'$ strictly contained in $Z$.
\item Every point in $S\setminus T'$ is contained in a non klt center of $(X,\Delta+tD')$ that does not contain any point of $T'$. 
\end{enumerate}
Furthermore
\[
b_x (X,\Delta+tD')\leqslant b_x(X,\Delta) - t \cdot \Big( \frac{D^d\cdot Z}{r\operatorname{mult}_x Z}\Big)^{1/d}
\]
for all points $x\in T'$.
\end{proposition}
\begin{proof}
Notice that $D^d \cdot Z=\operatorname{vol}(Z, D|_Z)$.
Then by Lemma \ref{volumesections} there exists a $\mathbb{Q}$-divisor $D''\in |D|_Z|_{\mathbb{Q}}$ such that 
\[
\operatorname{ord}_x D'' \geqslant \Big( \frac{D^d\cdot Z}{r\operatorname{mult}_x Z}\Big)^{1/d}
\]
for every point $x$ in $T$.
Let $D'\in|D|_{\mathbb{Q}}$ be a general lifting of $D''$ to $X$ and let 
\[
t=\sup\{c| (X,\Delta+cD') \text{ is log canonical at some point of $T$}\}
\]
By the definition of local discrepancy over $x$ and by the proof of \cite[Proposition 3.2]{helmke1}, we have 
\[
0\leqslant t \leqslant b_x(X,\Delta) \Big( \frac{r\operatorname{mult}_x Z}{D^d \cdot Z}\Big) ^{1/d}
\]
for all points $x$ in $T$.
Let $T_1$ be the set of points of $T$ where $(X,\Delta+tD')$ is log canonical.
For each $x$ in $T_1$ let $Z_x$ be the minimal log canonical center of $(X,\Delta+tD')$ at $x$. By construction, for any $x\in T_1$ we have that
$Z_x$ is strictly contained in $Z$. Choose a maximal element $Z'$ in the set
$\{Z_x | x\in T_1\}$ ordered by inclusion and let $T'=\{x\in T_1| Z_x = Z'\}$. Now, if $x\in S\setminus T$ then there is a non klt center of $(X,\Delta+tD')$
containing $x$ but none of the points of $T'$ by hypothesis. If $x\in T\setminus T'$ either $(X,\Delta+tD')$ is not log canonical at $x$, or $(X,\Delta+tD')$ is log canonical at $x$
but the minimal log canonical center $Z_x$ does not contain $Z'$.  In either case, if $x\in S\setminus T$ there is a non klt center which does not contain any of the points of $T'$.

The final statement of the Proposition is also clear.
\end{proof}

\begin{remark}\label{fulldim}
If $\Delta=0$ in Proposition \ref{cutlc}, then the conclusion holds without any hypothesis. 
\end{remark}

Proposition \ref{cutlc} shows that it is crucial to have control over the singularities of log canonical centers. In this direction,
we have:

\begin{theorem}\label{multiplicity}
Let $X$ be a smooth projective variety and $(X,\Delta)$ be log canonical at $x\in X$. Let $Z_d$ be the union 
of the elements of $\operatorname{LLC}(X,\Delta,x)$ of dimension $d$. Then
\[
\operatorname{mult}_x Z_d \leqslant \binom{n-\lceil b_x(X,\Delta)\rceil}{n-d}
\]
\end{theorem}
\begin{proof}
See \cite[Theorem 4.3]{helmke1}. 
\end{proof}

\section{Optimization}\label{sectionoptimization}

Let $s<n$ be nonnegative integers and let $r$ be any positive integer. Consider the set $R_{s,n} \subseteq \mathbb{R}^{s+2}\times \mathbb{N}^{s+2}$ consisting of elements
\[
(\underline{b}, \underline{d}) = (b_0, b_1, \cdots, b_s, b_{s+1}, d_0, d_1, \cdots, d_s, d_{s+1})
\] 
satisfying the following conditions:
\[
0= b_{s+1}  < b_s < \cdots < b_1 < b_0=n
\]
\[
0= d_{s+1} < d_s < \cdots < d_1<d_0=n
\]
and $b_i \leqslant d_i$ for all $1\leqslant i\leqslant s$. 
This section is devoted to the study of the functions:
\[
f(\underline{b}, \underline{d}, n, r)=\sum_{i=0} ^ s (b_i - b_{i+1}) \left[ r\binom{n-\lceil b_i\rceil}{n-d_i} \right]^{1/d_i}
\]
and
\[
F(n,r)=\operatorname{max}\{f(\underline{b}, \underline{d}, n, r)\mid  (\underline{b}, \underline{d})\in \cup_{s=0} ^{n-1} R_{s,n}\}
\]

In particular, we aim to prove the following upper bounds.

\begin{theorem}\label{upperbound}
	Let $n$ and $r$ be positive integers. Then:
	\begin{enumerate}
		\item $F(n,1)<\operatorname{max}\{n+1, n(\operatorname{log}\operatorname{log}(n)+2.34)\}$
		\item $F(n,r) < r+n-1 + \sqrt{r} n (\operatorname{log}\operatorname{log}(n)+2.34)\}$
	\end{enumerate}
\end{theorem}

We start by pointing out that in order to maximize $f$, it is sufficient to consider integral values of $b_i$. 

\begin{lemma}\label{maxinteger}
	Let $(\underline{b}, \underline{d})\in R_{s,n}$. Then there is an integer $s'\leqslant s$ and an element $(\underline{b}', \underline{d}')\in R_{s', n}$ such that
	the vector $\underline{b}'$ consists of integers and $f(\underline{b}, \underline{d}, n, r)\leqslant f(\underline{b}', \underline{d}', n, r)$.
\end{lemma}

\begin{proof}

	Consider the set  $B_s\subseteq \mathbb R^{s+2}$ consisting of elements
	$$ \underline b' = (b'_0,\cdots, b'_{s+1} )  $$
	satisfying the conditions 
	\begin{gather*}
		0= b'_{s+1}\leqslant b'_s \leqslant \cdots \leqslant b'_1 \leqslant b'_0=n, \\
		\lceil b_i\rceil -1 \leqslant b'_i \leqslant \lceil b_i \rceil
	\end{gather*}
	Consider the linear function: 
	\[
	L(\underline{b}') = \sum_{i=0} ^ s (b'_i - b'_{i+1}) \left[ r\binom{n-\lceil b_i\rceil}{n-d_i} \right]^{1/d_i}
	\]
         Notice that if $\underline{b'}$ is in the interior of $B_s$ then we have that $f(\underline{b'}, \underline{d}, n, r)=L(\underline{b'})$. 
         Also, $f(\underline{b}, \underline{d}, n, r)=L(\underline{b})$. 
	The set $B_{s}$ is a convex compact subset of $\mathbb R^{s+2}$, therefore $L$ achieves its maximum value at a vertex $\underline b'\in B_s$. 
	By construction, all the vertices of $B_s$ have integral coordinates. We have:
	$$ f(\underline{b}, \underline{d}, n, r) = L(\underline b)\leqslant L(\underline b')\leqslant f(\underline b', \underline d, n, r).$$
	
	After possibly erasing the entries of $b_i'$ which are repeated and the corresponding $d_i'$, we may assume that $(\underline{b'}, \underline{d'})$ belongs to $R_{s',n}$ for some $s'\leqslant s$. 
\end{proof}

Notice that if $n$ and $r$ are fixed, Lemma \ref{maxinteger} 
reduces the computation of $F(n,r)$ to finitely many steps. If $n$ is small enough, this computation may be carried out by a computer.
We list in Table 1 the first few values of $\lfloor F(n,r) \rfloor$. 

\begin{table}[h!] \label{tablefirstvalues}
	\begin{center}
\begin{tabular}
	{c|| cccccccccccccccc}
	
\diaghead(3,-2){fjjak}  {\,$r$} {$n$} & \ 2  & 3 & 4 & 5 & 6 & 7 & 8 & 9 & 10 & 11 & 12 & 13 & 14 & 15 & 16 & 17\\
\midrule
1& \ 2 & 3 & 4 & 6 & 8 & 9 & 11&13&15 & 17 & 19 & 21 & 24 & 26 & 28 &30\\
2&\  3 & 4 & 6 & 8 &10&11&13&15 &18 & 20 & 22 & 24 & 26 & 28 & 30 & 33\\
\bottomrule
\end{tabular}

\end{center}
\vspace{2pt}
\caption {Values of $\lfloor F(n,r) \rfloor$ for $2\leqslant n \leqslant 17$ and $r=1,2$.}
\end{table}

Next, by dropping the condition that $d_{j+1}$ is strictly less than $d_{j}$ for all $j$, we show that we may reduce to the case in which  $\underline b$ is the sequence of natural numbers 
$(0,1,\cdots, n)$
ranging from 0 to $n$.

\begin{lemma}\label{sumbound}
	Let $(\underline{b}, \underline{d})\in R_{s,n}$. Then there is a function $d:\{1,\cdots ,n\}\to \mathbb N$ with $b\leq d(b)\leq n$ such that:
	
	\[
	f(\underline{b},\underline{d},n,r) 
	\leqslant 
	\sum_{b=1} ^n \left[ r\binom{n-b}{n-d(b)}\right] ^{1/d(b)}
	\]
	
\end{lemma}
\begin{proof}
	By Lemma \ref{maxinteger}, we may assume that all the $b_i$ are integers. 
	Now if $x\leqslant n$ is a positive real number, we define $i(x)$ by the property $b_{i(x)+1} < x \leqslant b_{i(x)}$ and set $d(x)=d_{i(x)}$.
	Therefore, we have:
	
	\begin{equation*}
		\begin{split}
			f(\underline{b}, \underline{d}, n, r)&=\sum_{i=0} ^ {s} (b_i - b_{i+1}) \left[ r\binom{n-b_i}{n-d_i} \right]^{1/d_i}\\
			&= \sum_{i=0} ^{s} \sum_{b=b_{i+1}+1} ^{b_i} \left[ r\binom{n-b_i}{n-d_i} \right]^{1/d_i}\\
			&\leqslant\sum_{i=0} ^{s} \sum_{b=b_{i+1}+1} ^{b_i} \left[ r\binom{n-b}{n-d(b)} \right]^{1/d(b)}\\
			&=\sum_{b=1} ^n \left[ r\binom{n-b}{n-d(b)}\right] ^{1/d(b)}
		\end{split}
	\end{equation*}	
\end{proof}

We now need to measure the contribution of each term in Lemma \ref{sumbound}. 
We start with the following elementary estimate, which is not optimal, but already implies an upper bound on $F(n,r)$ that is quadratic in $n$ and essentially linear in $r$.

\begin{lemma}\label{elementaryestimate}
	Let $b\leq d\leq n $ and $r$ be positive integers. Then 
	$$\left[ r\binom{n-b}{n-d}\right] ^{1/d} \leq \sqrt[b]r  + n-b $$
\end{lemma}

\begin{proof}
First, we have that $\binom{n-b}{n-d}\leq n^{d-b}$. Then, using Young's inequality  $A^\lambda B^{1-\lambda} \leq \lambda A + (1-\lambda) B$ we get
\begin{align*}
	r^{1/d} \binom{n-b}{n-d}^{1/d} 
	&\leq (\sqrt[b] r)^{b/d} n^{1- b /d}\\
	& \leq  \frac b d \sqrt[b] r + \left(1 - \frac b d\right) n\\
	& \leq  \frac b b \sqrt[b] r + \left(1 - \frac b n\right) n\\
	& = \sqrt[b]r  + n-b
\end{align*}
\end{proof}

\begin{corollary}\label{elementarycorollary} For all positive integers $n$ and $r$ we have
	$$F(n,r) \leq \frac {n(n-1)} 2 + \sum _{b=1}^n \sqrt[b]r $$
\end{corollary}

\begin{proof}
	This follows from Lemma \ref{sumbound} and Lemma \ref{elementaryestimate}.
\end{proof}

\subsection{Optimization with Lambert's W function}

In order to give sharper estimates on $F(n,r)$ for large values of $n$, it is convenient to introduce the Lambert function.

\begin{definition}\label{definitionW}
Consider the function $u:\mathbb{R}_{\geqslant 0}\rightarrow \mathbb{R}_{\geqslant 0}$ defined by $u(x)=xe^x$. We define the Lambert function $W:\mathbb{R}_{\geqslant 0}\rightarrow \mathbb{R}_{\geqslant 0}$
as the inverse of $u$. 
\end{definition}

\begin{lemma}\label{Wbound}
Let $b\leqslant d\leqslant n$ and $r$ be positive integers. Then
\[
\left[ r\binom{n-b}{n-d}\right] ^{1/d}\leqslant \sqrt[b]{r} \operatorname{exp}\left( W\left( \frac{n}{b \sqrt[b]{r}} \right) \right)
\]
\end{lemma}
\begin{proof}
If $b=d$ there is nothing to prove, because the binomial on the left-hand side reduces to 1, while the exponential on the right-hand side is $\geqslant 1$. Therefore, suppose that $d>b$.

By the basic version of Stirling's inequality $A!\geqslant (A/e)^A$ we obtain:
\[
\binom{n-b}{n-d} =  \frac{(n-b)\cdot (n-b-1) \cdots (n-d+1)}{(d-b)!}\leqslant \left( \frac{en}{d-b} \right) ^{d-b}
\]
Therefore:
\[
\binom{n-b}{n-d}^{1/d} \leqslant \left( \frac{en}{d-b} \right) ^{\frac{d-b}{d}}
\]

Let $\delta=d-b$. By taking the logarithmic derivative in $\delta$, we see that the expression
\[
\psi(\delta)=r^{\frac{1}{b+\delta}} \left( \frac{en}{\delta} \right) ^{\frac{\delta}{b+\delta}}
\]

is maximized when 
\[
\delta + \operatorname{log}(r) = b(\operatorname{log}(n)-\operatorname{log}(\delta))
\]
Then $\delta$ may be expressed as $\delta=b w $ where $w = W\big( \frac{n}{b\sqrt[b]{r}}\big)$. 
Now, plugging this value of $\delta$ into $\psi$, and using the defining properties of $W$, we get:
\[
r^{\frac{1}{b+\delta}} \left( \frac{en}{\delta} \right) ^{\frac{\delta}{b+\delta}}
 =
  \sqrt[b]{r}^{\frac 1 {1+w}}  \left(e \sqrt [b]r e^w \right)^{\frac w {w+1}} = \sqrt [b]r e^w
\]
\end{proof}
\begin{remark}\label{remarkWbound}
	By the properties of the Lambert function, the previous lemma may also be written in the following way:
	$$ \left[ r\binom{n-b}{n-d}\right] ^{1/d} \leq  n/\delta (b) $$
	where $\delta(b) = b W\big( n/({b\sqrt[b]{r}})\big)$. 
\end{remark}

We are ready to prove Theorem \ref{upperbound}. We start with its first part.

\begin{theorem}\label{upperbound1}
Let $n$ be a positive integer. Then: 
\[
F(n,1)<\operatorname{max}\{n+1, n(\operatorname{log}\operatorname{log}(n)+2.34)\}
\]
\end{theorem}
\begin{proof}
By Lemma \ref{sumbound} and Lemma \ref{Wbound}, we get:
\[
f(\underline{b}, \underline{d}, n, 1) \leqslant \sum_{b=1} ^n  e^{W(n/b)}\leqslant e^{W(n)} + \int_1 ^n e^{W(n/x)}dx
\]

By the change of variable $t=n/b$ we get:

\[
\int_1 ^n e^{W(n/x)}dx = n\int_1 ^n \frac{e^{W(t)}}{t^2} dt = n\left[\operatorname{log}W(t)- \frac{1}{W(t)}\right]_1 ^n 
\]

Therefore
\begin{equation*}
\begin{split}
f(\underline{b}, \underline{d}, n, 1) &\leqslant n\left (\frac 1 {W(n)}+ \operatorname{log}W(n)-\frac 1 {W(n)}-\operatorname{log}W(1)+\frac 1 {W(1)}\right)\\
&= n \ \big(\operatorname{log}W(n)-\operatorname{log}W(1)+W(1)^{-1}\big)
\end{split}
\end{equation*}

Now, for $n\geqslant 3$ we have that $W(n)\leq \log n$ and so
\[
\operatorname{log}W(n)-\operatorname{log}W(1)+1/W(1)< \operatorname{log}\operatorname{log}(n)+2.34
\]

For $n<3$, we may use Table 1 instead.
\end{proof}

Similarly, we prove now the second part of Theorem \ref{upperbound}.

\begin{theorem}\label{upperbound2}
Let $n,r\geq 2$ be a positive integers. Then: 
\[
F(n,r)<r+n-1+\sqrt r  n(\operatorname{log}\operatorname{log}(n)+2.34)
\]
\end{theorem}
\begin{proof}
As in the case $r=1$, we start with Lemma \ref{sumbound}, which gives
\[
f(\underline{b}, \underline{d}, n, r) \leqslant \sum_{b=1} ^n
\left[ r\binom{n-b}{n-d}\right] ^{1/d}
\]
This time, however, we estimate with Lemma \ref{Wbound} only the terms of the sum with $b\geqslant 2$. For the first term, instead, we use Lemma \ref{elementaryestimate}. We get

\[
f(\underline{b}, \underline{d}, n, r)
\leqslant
 r+ n-1 +\sum_{b=2} ^n  \sqrt[b]r  e^{W(n/(b\sqrt[b]r))}
\]

Since $W(n/(b\sqrt[b]r)\leq W(n/b)$ and $\sqrt[b]r\leq \sqrt r$ for $b\geq 2$, we may continue as in the proof of Theorem \ref{upperbound1}:

\begin{align*}
	f(\underline{b}, \underline{d}, n, r)
	&\leqslant 
	r+n-1	+ \sqrt r\, \int_1 ^n e^{W(n/x)}dx\\
	& \leq 
	r+n-1 + \sqrt r \, n (\log W(n) - W(n)^{-1} + 2.34)\\
	& \leq 
	r+n-1 + \sqrt r\,  n (\log \log n + 2.34)
\end{align*}
\end{proof}

\section{The main result}\label{sectionmainresult}

Here we apply the methods developed so far to prove Theorem \ref{intro1} and Theorem \ref{intro3}. We start with the following:

\begin{theorem}\label{main}
Let $X$ be a smooth projective variety of dimension $n$ and let $D$ be an ample $\mathbb{Q}$-divisor. Let $S$ be any finite set of points of $X$ of cardinality $r$.
Suppose that 
\[
D^d \cdot Z\geqslant 1
\]

for all irreducible $d$-dimensional subvarieties $Z$ containing at least one point of $S$. 
Fix any positive rational number $0<\epsilon\ll 1$. 
Then there exists a point $x\in S$ and a 
$\mathbb{Q}$-divisor $\Delta\in |tD|_{\mathbb{Q}}$ such that:
\begin{enumerate}
\item $t < F(n,r)+\epsilon$.
\item $(X,\Delta)$ is log canonical but not Kawamata log terminal at $x$.
\item $\operatorname{LLC}(X,\Delta,x)=\{x\}$.
\item $S$ is contained in $\operatorname{Nklt}(X,\Delta)$. 
\end{enumerate}
\end{theorem}
\begin{proof}
We define inductively a sequence of $\mathbb{Q}$-divisors, subvarieties, finite sets of points, positive rational numbers and positive integers
$(D_i, Z_i, T_i, t_i, d_i)$ for $0\leqslant i\leqslant s+1$ as follows. 
Set $D_0 = 0$, $Z_0 = X$, $T_0 = S$, $t_0 = 0$ and $d_0 = n$.
Now suppose that we are given $(D_i, Z_i, T_i, t_i, d_i)$. If $i>0$, suppose that:
\begin{enumerate}
\item $(X, D_i)$ is log canonical at all points of $T_i$.
\item All points of $T_i$ share a common minimal log canonical center $Z_i$.
\item Every point in $S\setminus T_i$ is contained in a non klt center of $(X, D_i)$ that does not contain any point of $T_i$.
\item $d_i  = \operatorname{dim}(Z_i)$.
\end{enumerate}

If $d_i = 0$, we stop. If not, we construct $(D_{i+1}, Z_{i+1}, T_{i+1}, t_{i+1}, d_{i+1})$ as follows. 
By Proposition \ref{cutlc} (see also Remark \ref{fulldim} for the case $i=0$), there exists
a nonempty subset $T'$ of $T_i$, a rational number $t$ and a $\mathbb{Q}$-divisor $D'\in |D|_{\mathbb{Q}}$ such that 
\begin{enumerate}
\item $(X, D_i + tD')$ is log canonical at all points of $T'$.
\item All points of $T'$ share a common minimal log canonical center $Z'$ strictly contained in $Z_i$. 
\item Every point in $S\setminus T'$ is contained in a non klt center of $(X, D_i + tD')$ that does not contain any point of $T'$.
\item For all points $x\in T'$ we have
\[
b_x(X,D_i+tD')\leqslant b_x(X,D_i) - \frac{t}{(r\operatorname{mult}_x Z_i)^{1/d_i}}
\]
\end{enumerate}

We set $D_{i+1} = D_i + tD'$, $Z_{i+1}=Z'$, $T_{i+1}=T'$, $t_{i+1}=t$ and $d_{i+1}=\operatorname{dim}(Z')$.
By construction, $Z_{s+1}$ is zero dimensional and non-empty. Let $x$ be any point contained in $Z_{s+1}$. We define
a sequence of positive rational numbers and positive integers $(b_i, m_i)$ for $0\leqslant i\leqslant s+1$ as follows. We set
$b_0 = n$ and $m_0=1$. For any $i>0$, we set
$b_i = b_x(X, D_i)$ and $m_i =\operatorname{mult}_x Z_i$. By $(4)$ above we have that for all $0\leqslant i\leqslant s$:
\[
t_{i+1} \leqslant (b_i-b_{i+1})\cdot (rm_i)^{1/d_i}\tag{\text{*}}
\]

By Theorem \ref{multiplicity} we have that
\[
m_i \leqslant \binom{n-\lceil b_i\rceil}{n-d_i}
\]

Therefore by $(*)$:
\[
\sum_{i=0} ^s t_{i+1}\leqslant f(\underline{b}, \underline{d}, n, r) \leqslant F(n,r)
\]

We may now conclude by tie breaking with Lemma \ref{tiebreak}.
\end{proof}

\begin{theorem}\label{gg}
Let $X$ be a smooth projective variety of dimension $n$, let $L$ be an ample line bundle and let
$r$ be a positive integer. Then $K_X + mL$ separates $r$ points for $m > F(n,r)$.
\end{theorem}
\begin{proof}
Since $L$ is a line bundle we have that $L^d \cdot Z\geqslant 1$ for all irreducible $d$-dimensional subvarieties $Z$. Let $m>F(n,r)$ be any positive integer. 
We prove that $K_X + mL$ separates $r$ points by induction on $r$. The first step is to show that $K_X + mL$ is base point free.
Let $x$ be any point of $X$ and let $t$ and $\Delta\in|L|_{\mathbb{Q}}$ be as in Theorem \ref{main} with $S=\{x\}$. Then $t < F(n,r)+\epsilon$ for a small positive rational number $\epsilon$. 
Consider the short exact sequence
\[
0\rightarrow \mathcal{O}_X (K_X+mL)\otimes \mathcal{I}(X,\Delta)\rightarrow
\mathcal{O}_X (K_X+mL)\rightarrow \frac{\mathcal{O}_X (K_X+mL)}{\mathcal{I}(X,\Delta)\otimes \mathcal{O}_X (K_X+mL)}\rightarrow 0
\]
Since $\operatorname{LLC}(X,\Delta,x)=\{x\}$, we have that $\mathcal{O}_x$ is a direct summand of 
\[
\frac{\mathcal{O}_X (K_X+mL)}{\mathcal{I}(X,\Delta)\otimes \mathcal{O}_X (K_X+mL)}
\]
Therefore, by taking the associated long exact sequence and by using Theorem \ref{nadel}, we get a surjection
\[
H^0(X, \mathcal{O}_X (K_X+mL))\rightarrow k 
\]
which is what we wanted. Now suppose that $K_X+mL$ separates all $r-1$ points.
Fix any set $S$ of $r$ points of $X$. Again, let $t$ and $\Delta\in|L|_{\mathbb{Q}}$ be as in Theorem \ref{main} with this choice of $S$ and let $x$ be a point of $S$ such that
$\operatorname{LLC}(X,\Delta,x)=\{x\}$.
Consider once more the short exact sequence above. We have a splitting:
\[
\frac{\mathcal{O}_X (K_X+mL)}{\mathcal{I}(X,\Delta)\otimes \mathcal{O}_X (K_X+mL)} = 
\mathcal{O}_x \oplus \frac{\mathcal{O}_X(K_X+mL)}{\mathcal{I}'\otimes \mathcal{O}_X(K_X+mL)}
\]

By lifting a section that is $0$ on the second factor and $1$ on the first factor, we get a section $s\in H^0(X, \mathcal{O}_X(K_X+mL))$ that vanishes
on $S\setminus \{x\}$ and that does not vanish on $x$. Since $K_X + mL$ separates points of $S\setminus \{x\}$ by induction, we have that $K_X+mL$ separates points of $S$.
Since $S$ is arbitrary, the statement follows.
\end{proof}

\begin{proof}(of Theorem \ref{intro1} and Theorem \ref{intro3}). 
Immediate from Theorem \ref{gg} and Theorem \ref{upperbound}.
\end{proof}

We also record here the following result. 

\begin{corollary}
Let $X$ be a smooth projective threefold and let $L$ be an ample line bundle. Then $|K_X+5L|$ defines a birational morphism.
\end{corollary}
\begin{proof}
This is a consequence of Theorem \ref{gg} and the fact that $F(3,2)<5$ by Table 1. 
\end{proof}

\subsection{Sixfolds}\label{sectionsixfolds}

Table 1 in Section \ref{sectionoptimization} shows that for $n\leqslant 4$ the values of the function $F(n,1)$ are enough to prove Fujita's basepoint freeness conjecture. 
For larger values of $n$, however, the geometry of the problem is not fully reflected in the combinatorics of $F$. In fact, it is possible to carry out a slightly finer study by sharpening the inequalities
appearing in the proof of Theorem \ref{main} in certain geometric situations. This was for instance done in \cite{yezhu1} and \cite{yezhu2} to prove Fujita's freeness conjecture for $n=5$.
This kind of study does not change the asymptotic behavior in $n$, so we only carry it out here for $n=6$ as an example.

Let $s<n$ be two positive integers. Consider the set $U_s \subseteq \mathbb{R}^{s+2}\times \mathbb{N}^{s+2}\times \mathbb{N}^{s+2}$ consisting of elements
\[
(\underline{b}, \underline{d}, \underline{m}) = (b_0, \cdots, b_{s+1}, d_0, \cdots, d_{s+1}, m_0, \cdots, m_{s+1})
\] 
satisfying the following conditions:
\begin{enumerate}
\item $0= b_{s+1}  < b_s < \cdots < b_1 < b_0=n$.
\item $0= d_{s+1} < d_s < \cdots < d_1<d_0=n$.
\item $b_i \leqslant d_i$ for all $1\leqslant i\leqslant s$.
\item $m_i \leqslant \binom{n-\lceil b_i \rceil}{n-d_i}$ for all $1\leqslant i\leqslant s$.
\item $b_i\leqslant 2/m_i$ if $d_i=2$.
\item $d_1\neq n-1$.
\end{enumerate}

Consider now the functions:
\[
g(\underline{b}, \underline{d}, \underline{m},n)=\sum_{i=0} ^s (b_i - b_{i+1}) \sqrt[d_i]{m_i}
\]

and

\[
G(n)=\operatorname{max}\{g(\underline{b}, \underline{d}, \underline{m}, n)| (\underline{b}, \underline{d}, \underline{m})\in \cup_{s=0} ^{n-1} U_s\}
\]

\begin{lemma}\label{finebound}
We have that $G(6)<8$.
\end{lemma}
\begin{proof}
Fix $\underline{d}$ and $\underline{m}$. Then $\overline{U_s} \cap \mathbb{R}^{s+2} \times \{\underline{d}\} \times \{\underline{m}\}$ is a compact convex polyhedron and therefore it's the convex hull of its vertices. By linearity of $g$ in the $b_i$ entries, $g$ is maximal at one such vertex $(\underline{b}, \underline{d}, \underline{m})$. 
By looking at the conditions $(1)-(6)$ defining $U_s$ we see that $\underline{b}$ consists of integral entries, unless $2\in \{d_i\}$. Suppose then that $2\in\{d_i\}$ and let $i$ be the index such that $d_i=2$. 
By possibly erasing all entries (if any) $(b_j, d_j)$ such that $b_j=b_i$ for $j<i$, we may assume that $i$ is the smallest index such that
$b_i$ is a non-integral entry in $\underline{b}$. Furthermore, we must have $m_i\geqslant 3$ and $b_i = 2/m_i$. In this case, it's immediate to see that 
\[
\sum_{j=i} ^{s+1} (b_j-b_{j+1}) \binom{n-\lceil b_j \rceil}{n-d_j} ^{1/d_j} \leqslant \frac{2}{m_i ^{1/2}}
\]

Therefore $g(\underline{b}, \underline{d}, \underline{m}, n)$ is bounded above by the expression:

\[
\sum_{j=0} ^{i-2} (b_j - b_{j+1}) \binom{n-\lceil b_j \rceil}{n-d_j}^{1/d_j} + (b_{i-1}-2/m_i) \binom{n-\lceil b_{i-1} \rceil}{n-d_{i-1}} ^{1/d_{i-1}} + \frac{2}{m_i ^{1/2}}
\]

In either case, we have once again reduced the problem to a finite number of computations
and the result follows by running a computer program on all the possible combinations. 
\end{proof}

Theorem \ref{main} may be slightly sharpened by the following.

\begin{theorem}
Let $X$ be a smooth projective variety of dimension $n$ and let $D$ be an ample $\mathbb{Q}$-divisor. Let $x$ be a point of $X$.
Suppose that 
\[
D^d \cdot Z\geqslant 1
\]

for all irreducible $d$-dimensional subvarieties $Z$ containing $x$.
Fix any positive rational number $0<\epsilon\ll 1$. 
Then there exists a $\mathbb{Q}$-divisor $\Delta\in |tD|_{\mathbb{Q}}$ such that:
\begin{enumerate}
\item $t < G(n)+\epsilon$.
\item $(X,\Delta)$ is log canonical but not Kawamata log terminal at $x$.
\item $\operatorname{LLC}(X,\Delta,x)=\{x\}$.
\end{enumerate}

In particular, if $D$ is Cartier and $n=6$ then $K_X + mD$ is basepoint free for all $m\geqslant 8$.
\end{theorem}
\begin{proof}
Consider the sequence $(D_i, Z_i, T_i, t_i, d_i, b_i, m_i)$ obtained by applying the proof of Theorem \ref{main}. 
Suppose first that $d_1\neq n-1$. Then $(\underline{b}, \underline{d}, \underline{m})$ belongs to $U_s$. 
In fact, conditions $(1)-(5)$ are clear and $(6)$ follows for example from \cite[Theorem B.1]{yezhu2}. 
Therefore, in this case we are done by relation $(*)$ of the proof of Theorem \ref{main} and by Theorem \ref{gg}.

On the other hand, if $d_1=n-1$, then by the proof of \cite[Theorem 4.4]{helmke2} applied to $(n+\epsilon)D$, we have that
\[
t_1 \leqslant n-b_1 \cdot \sqrt[n-1]{m_1}
\]
If one uses this estimate for $t_1$ and relation $(*)$ for $t_i$ with $i\geqslant 2$, we see that $b_1$ simplifies from the expression. 
Therefore, we may always assume that $(\underline{b}, \underline{d}, \underline{m})$ belongs to $U_s$.

Finally, if $n=6$, then the result follows from Lemma \ref{finebound}.
\end{proof}

%
%
%
%
%
%
%
%
%
%
%
%
%
%
	
\section{Appendix}\label{appendix}

In this appendix, we complement Section \ref{sectionoptimization} by showing that the asymptotic bound of Theorem \ref{upperbound} (1) is optimal. We also show some alternative ways of bounding $F(n,r)$. 

\subsection{Optimality}

Theorem \ref{upperbound} (1) asserts that $F(n,1)=O(n\operatorname{log}\operatorname{log}(n))$. Here we show:

\begin{theorem}\label{lowerbound}
For $n$ large enough
\[
F(n,1)\geqslant \frac{1}{4e} n\operatorname{log}\operatorname{log}(n)
\]
\end{theorem}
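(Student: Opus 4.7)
The plan is to construct a family of hard instances on $n$ elements for which no admissible structure can be smaller than $\frac{1}{4e}n\log\log n$. Since the matching upper bound $O(n\log\log n)$ of Theorem \ref{upperbound}(1) is presumably tight at some scale $k\approx\log\log n$, I would first re-examine that proof to identify the critical parameter $k$ and to reverse-engineer the configurations it handles optimally; these extremal configurations are the natural candidates for the hard inputs in the lower bound.

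I would then partition $[n]$ into $\lfloor n/k\rfloor$ blocks of size $k\approx\log\log n$ and place on each block a local obstruction forcing every valid structure to invest $\Theta(k)$ elements per block. Summing local costs already delivers an $\Omega(n\log\log n)$ bound; the real task is to show that no global amortization across blocks can beat this. For this I expect to use a double-counting / pigeonhole argument: bound the number of structures of size $s$ that can simultaneously cover all inputs in the constructed family, and compare with the size of the family to force $s\geqslant \frac{1}{4e}n\log\log n$.

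The explicit constant $1/(4e)$ should emerge from the counting step, where a factorial inequality of the form $k!\geqslant (k/e)^k$ naturally produces the $e$ in the denominator; the factor $4$ is most likely a composite of small losses (rounding $k$ to an integer, conversion between $\log$ and $\ln$, and perhaps a factor $2$ slack coming from optimising over the block size near the extremum).

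The main obstacle I anticipate is ruling out cross-block amortization: a structure might attempt to share elements between blocks, and preserving the explicit constant $1/(4e)$ requires that such sharing be essentially forbidden by the adversarial design. A probabilistic construction, in which the local obstruction in each block is chosen independently at random, would likely be the cleanest route: a union bound over all candidate structures of size $<\frac{1}{4e}n\log\log n$ then shows that, with positive probability, some instance in the family defeats every such structure, which closes the argument.
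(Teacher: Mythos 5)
The proposal misidentifies the nature of the statement being proved, and as a result the entire strategy is off target. In this paper $F(n,r)$ is an \emph{analytic optimization quantity}: it is the supremum of the explicit function $f(\underline b,\underline d,n,r)$ over the admissible index set $R_{s,n}$, as is clear from the way the paper freely writes $F(n,r)\geq f(\underline b,\underline d,n,r)$ for any chosen $(\underline b,\underline d)\in R_{s,n}$ (see Proposition~\ref{easylowerboundsinr}). Hence a lower bound on $F(n,1)$ requires nothing more than exhibiting one particular choice of $(\underline b,\underline d)$ and estimating $f(\underline b,\underline d,n,1)$ from below. There is no adversary, no family of ``hard instances,'' no covering structure to amortize over, and nothing for a pigeonhole or probabilistic union-bound argument to act on. Everything you propose --- partitioning $[n]$ into blocks of size $\log\log n$, planting local obstructions, ruling out cross-block amortization, and a randomized construction with a union bound over small structures --- belongs to a different kind of problem (a worst-case combinatorial or complexity lower bound) and has no counterpart in what actually needs to be shown.

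The paper's proof instead picks the sequence $b_j$ ranging over all integers up to $\lfloor n/10\rfloor$, sets $d_j=b_j+\lceil b_j W(n/b_j)\rceil$ where $W$ is the Lambert $W$ function, verifies the monotonicity constraints defining $R_{s,n}$, and then lower-bounds each summand $\binom{n-b_j}{n-d_j}^{1/d_j}$ by $\frac{1}{4e}\,\frac{n}{b_j W(n/b_j)}$ (Lemma~\ref{estimatelowerbound}). Comparing the resulting sum with the integral $\int_1^{n/10}\frac{n}{bW(n/b)}\,db$ and using that this integral evaluates to $n\bigl[\log W(z)-1/W(z)\bigr]_{10}^{n}$, together with $W(n)\geq \tfrac12\log n$, yields the $\frac{1}{4e}n\log\log n$ bound. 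You correctly guessed that a Stirling-type inequality $(k/e)^k\leq k!$ lurks behind the constant $e$, and that a factor of roughly $2$ slack enters, but the mechanism is entirely different from what you describe: the factor $4e$ comes from the inequality $\binom{n-b}{n-d}\geq\bigl(\tfrac{n/2}{d-b}\bigr)^{d-b}$ combined with the identity $(en/(bw))^{w/(w+1)}=n/(bw)$ for $w=W(n/b)$. To repair the proposal you would need to start from the actual definition of $F$ as a supremum over $R_{s,n}$ and choose an explicit $(\underline b,\underline d)$; once that reframing is made, none of the machinery you outline is needed.
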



 

We start with the following. 

\begin{lemma}\label{estimatelowerbound}
	Let $b\leq d\leq n$ be positive integers such that $ b \leq n/10$ and 
	$$
	bW(n/b) \leq  d-b\leq  2bW(n/b).$$
	Then 
	$$
	\binom{n-b}{n-d}^{1/d} \geq \frac 1 {4e}\frac n { b W(n/b)}
	$$ 
\end{lemma}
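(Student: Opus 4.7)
The plan is to apply the standard binomial estimate $\binom{N}{k}\geq (N/k)^k$ (valid for $N\geq k\geq 1$) to $\binom{n-b}{d-b}$ and then to exploit a Lambert-$W$ identity to control the exponent $1/d$.

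First I would set $k:=d-b$, so that $\binom{n-b}{n-d}=\binom{n-b}{k}$; the hypothesis $d\leq n$ gives $k\leq n-b$, and $bW(n/b)\leq k\leq 2bW(n/b)$ controls $k$. Applying the standard estimate and raising to the $1/d$ power yields
$$\binom{n-b}{k}^{1/d}\;\geq\;\left(\frac{n-b}{k}\right)^{k/d}\;=\;\frac{n-b}{k}\cdot\left(\frac{k}{n-b}\right)^{b/d},$$
using $k/d=1-b/d$. The first factor is easy: from $k\leq 2bW(n/b)$ and $b\leq n/10$, hence $n-b\geq 9n/10$, one gets $(n-b)/k \geq 9n/(20\,bW(n/b))$.

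The heart of the argument is bounding the second factor from below, equivalently showing $\bigl((n-b)/k\bigr)^{b/d}\leq e$. The bound $k\geq bW(n/b)$ yields both $(n-b)/k\leq n/(bW(n/b))$ and $b/d=b/(b+k)\leq 1/W(n/b)$. Writing $u:=W(n/b)$, the defining identity $ue^u=n/b$ gives the key rearrangement $n/(bu)=e^u$, so
$$\left(\frac{n-b}{k}\right)^{b/d}\;\leq\;\left(\frac{n}{bW(n/b)}\right)^{1/W(n/b)}\;=\;(e^u)^{1/u}\;=\;e.$$
Combining the two estimates gives $\binom{n-b}{k}^{1/d}\geq \frac{9n}{20e\,bW(n/b)}\geq \frac{1}{4e}\cdot\frac{n}{bW(n/b)}$, since $9/20>1/4$.

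The main obstacle is spotting the rewrite $n/(bW(n/b))=e^{W(n/b)}$: without it the factor $\bigl((n-b)/k\bigr)^{b/d}$ looks as if it could destroy the estimate, but the Lambert-$W$ identity tames it into exactly the constant $e$, which is precisely why the target bound carries the factor $1/(4e)$. Everything else is routine arithmetic after the split $k/d=1-b/d$ is chosen.
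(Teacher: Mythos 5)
Your proof is correct and takes essentially the same route as the paper: both rest on the elementary estimate $\binom{n-b}{d-b}\geq\bigl(\tfrac{n-b}{d-b}\bigr)^{d-b}$ and on the Lambert-$W$ identity $n/(bW(n/b))=e^{W(n/b)}$, the only difference being bookkeeping (you split off the factor $\bigl(\tfrac{n-b}{k}\bigr)^{b/d}\leq e$, while the paper compares the exponent $(d-b)/d$ with $w/(w+1)$ and absorbs the constant as $(1/(4e))^{w/(w+1)}\geq 1/(4e)$). Your exponent step $\bigl(\tfrac{n-b}{k}\bigr)^{b/d}\leq\bigl(\tfrac{n}{bW(n/b)}\bigr)^{1/W(n/b)}$ is legitimate precisely because $n/(bW(n/b))=e^{W(n/b)}\geq 1$, so the argument is complete.
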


\begin{proof}
	First, note that the function $b\mapsto bW(n/b)$ is strictly increasing. Using the fact that $W(10)<2$, we see that
	$$ d\leq b+2bW\left(\frac n b \right) \leq \frac n {10} (1+2W(10))<\frac n 2.$$
	Therefore, we may estimate $\binom {n-b}{n-d}$ as
	$$
	\binom {n-b}{n-d} \geq  \left(\frac {(n/2)}{d-b}\right) ^{d-b}	
	$$
	Let $w=W(n/b)$ and note that
	$$
	\left( \frac {en }{bw}\right)^{w/(w+1)}  = \frac {n}{bw}
	$$
	because both sides are equal to $e^w$. 
	Then:
	\begin{align*}
		\binom{n-b}{n-d}^{1/d} 
		&\geq 
		\left(\frac{n}{2(d-b)}\right) ^{(d-b)/d}\\
		& \geq
		\left(\frac 1 {4e}\frac {en}{bw}\right) ^{w/(w+1)}\\
		& =
		\left(\frac 1 {4e}\right) ^{w/(w+1)} \frac {n}{bw},
	\end{align*}
	which is at least $(4e)^{-1}  n/(bw)$. 
\end{proof}

Lemma \ref{estimatelowerbound} allows us to state sufficient conditions on $(\underline b, \underline d)$ for which $f(\underline b, \underline d, n,1)$ grows asymptotically as $n\log \log n$. 

\begin{lemma}\label{lemmalowerbound}
	Let $n\geq 10$ and let
	$$(b_{s+1},\ldots, b_1) = (0, 1, \ldots ,\lfloor n/10\rfloor )$$ 
	be the whole reverse sequence of integers up to $n/10$. 
	Now suppose that $(\underline b,\underline d)\in R_{s,n}$ satisfies the following condition:
	$$
	 b_j W(n/b_j) \leq  d_j- b_j \leq 2 b_j W(n/b_j)
	$$
	for all $1\leq j\leq s$. Then 
	$$
	f(\underline b, \underline d, n,1) \geqslant \frac 1 {4e} n \log \log n
	$$
\end{lemma}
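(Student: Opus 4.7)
The plan is to apply Lemma~\ref{estimatelowerbound} at every scale $b_j$ separately and then aggregate the resulting pointwise bounds via the formula for $f(\underline b, \underline d, n, 1)$ established in Section~\ref{sectionoptimization}. For each $1 \leq j \leq s$, the hypothesis $b_j W(n/b_j) \leq d_j - b_j \leq 2 b_j W(n/b_j)$ is exactly the one required by Lemma~\ref{estimatelowerbound}, and $b_j \leq \lfloor n/10 \rfloor \leq n/10$, so the lemma applies directly and yields
\[
\binom{n-b_j}{n-d_j}^{1/d_j} \;\geq\; \frac{1}{4e}\cdot \frac{n}{b_j\, W(n/b_j)}.
\]

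Next I would substitute these scale-by-scale estimates into the definition of $f(\underline b, \underline d, n, 1)$ and extract the common prefactor $n/(4e)$, so that the task reduces to a lower bound on a weighted sum over $j$ of $1/(b_j W(n/b_j))$. Because the reversed sequence $(b_{s+1}, \ldots, b_1)$ visits every integer from $0$ to $\lfloor n/10 \rfloor$, the relevant weights are essentially $1$ and the sum collapses to
\[
\sum_{b=1}^{\lfloor n/10 \rfloor} \frac{1}{b\, W(n/b)}.
\]

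The analytic heart of the argument is then to show that this sum is at least $\log\log n - O(1)$. I would estimate it by comparison with the integral $\int_{1}^{n/10} \frac{db}{b\log(n/b)}$, which under the substitution $u = \log(n/b)$ becomes $\int_{\log 10}^{\log n} \frac{du}{u} = \log\log n - \log\log 10$; the asymptotic $W(n/b) \sim \log(n/b)$ ensures that replacing $W$ by $\log$ costs only a $(1+o(1))$ multiplicative factor, which preserves the leading term.

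The main obstacle is bookkeeping rather than substance: one has to verify that $f(\underline b, \underline d, n, 1)$ really does aggregate the scale-$j$ bounds additively with unit weights, so that no spurious factor of $d_j$ or $d_j-b_j$ contaminates the constant $1/(4e)$, and that the $O(1)$ losses from the discrete-to-integral comparison and from passing between $W(n/b)$ and $\log(n/b)$ are negligible against $\log\log n$ for $n$ large. Once these routine checks are in place, multiplying the resulting $\log\log n$ by $n/(4e)$ produces the claimed lower bound $\frac{1}{4e}\, n\log\log n$.
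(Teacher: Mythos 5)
Your overall strategy coincides with the paper's: apply Lemma \ref{estimatelowerbound} term by term, assemble the resulting bounds into the sum $\sum_{b=1}^{\lfloor n/10\rfloor} \frac{1}{b\,W(n/b)}$, and compare with an integral. Your variant of the last step (replacing $W(n/b)$ by $\log(n/b)$ and substituting $u=\log(n/b)$) is a legitimate alternative to the paper's route, which integrates $1/(zW(z))$ exactly via the antiderivative $\log W(z)-1/W(z)$ and then uses $W(n)\geq \frac12\log n$. But justify the replacement by the exact inequality $W(x)\leq \log x$, valid for all $x\geq e$ and hence uniformly on the whole range $10\leq n/b\leq n$, rather than by the asymptotic equivalence $W\sim\log$: that equivalence is not uniform near the endpoint $n/b=10$, and a multiplicative $(1+o(1))$ loss would itself be fatal to the exact constant. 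Fortunately the inequality goes in the direction you need, so this part is repairable.

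The genuine gap is in your final accounting. The lemma claims $f(\underline b,\underline d,n,1)\geq \frac{1}{4e} n\log\log n$ with the exact constant $1/(4e)$ and no error term (and Theorem \ref{lowerbound} needs exactly this). Your argument, even made fully rigorous, yields $\frac{n}{4e}\left(\log\log n-\log\log 10-\cdots\right)$, i.e. the target minus a positive multiple of $n$. Such a deficit is ``negligible against $\log\log n$'' only in ratio; it never becomes nonnegative, so ``multiplying the resulting $\log\log n$ by $n/(4e)$'' does not give the claimed inequality for any $n$, large or not. What closes the gap in the paper is precisely the piece of $f$ you set aside as bookkeeping: because the $b_j$ stop at $\lfloor n/10\rfloor$, the expression for $f$ contains the additional additive contribution $n-\lfloor n/10\rfloor\geq \frac{9}{10}n$, and the paper checks that this term absorbs all the constant-times-$n$ losses (the boundary term of the integral and the cost of $W(n)\geq\frac12\log n$, each comfortably below $0.4\,n$). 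You must retain and exploit that extra term; observing that ``the relevant weights are essentially $1$'' is not enough to recover the stated constant.
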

\begin{proof}
	By Lemma \ref{estimatelowerbound} we have 
	\begin{align*}
		f(\underline b, \underline d , n, 1) 
		& = n- \lfloor n/10\rfloor + \sum_{j=1}^{s} \binom{n-b_j}{n-d_j}^{1/d_j}\\
		& \geqslant \frac 9 {10} n + \frac 1 {4e}\sum_{b=1}^{\lfloor n/10\rfloor} \frac n { b W(n/b)}\\
		& \geqslant \frac 9 {10} n + \frac 1 {4e}\int_{1}^{n/10} \frac n {  b W(n/b)} db
		\end{align*}	
	Then by the substitution $z=n/b$ we get 	
		\begin{align*}	
			 \int_{1}^{n/10} \frac n { b W(n/b)} db
		& = n \int_{10}^{n}\frac  z {  W(z)} dz\\
		& = n \Big[ \log W(z) - \frac 1 {W(z)} \Big]_{10}^{n}\\
	\end{align*}
	Note that  $-1/W(n)+1/W(10)\geqslant 0$ and $\log W(10)<\log 2$.
	Therefore
	$$f(\underline b, \underline d, n,1)\geq \frac n {4e} \log W(n) + \frac 9 {10} n- \frac {\log 2}{4e} n$$
	Since the inequality  
	$$
	W(n)\geq \frac 1 2 \log n 
	$$
	holds for all $n$, and since $\log 2/(4e)<0.4$, we are done. 
\end{proof}

We are now ready to prove Theorem \ref{lowerbound}.

\begin{proof}(of Theorem \ref{lowerbound}).
	It suffices to show that for $n$ large enough there exist $(\underline b,\underline d)\in R_{s,n}$ as in Lemma \ref{lemmalowerbound}.
	To this aim, let $n\geq 110$, let $(b_{s+1},\ldots, b_1) = (0, 1, \ldots ,\lfloor n/10\rfloor )$ and let 
	$$d_j = b_j + \lceil b_j W(n/b_j)\rceil.$$
	Note that $d_j \leq  b_j + 2 b_j W(n/b_j)$ for all $1\leq j\leq s$ because
	$$   b_j W(n/b_j) \geqslant W(10)>1.$$
	It now suffices to show that
	$$
	0= d_{s+1} < d_s < \cdots < d_1<d_0=n.
	$$
	The inequality $d_1<n$ follows from the the following computation:
	\begin{align*}
		d_1 
		&\leq  \frac n {10}+ \frac n {10} W(11) + 1 \\
		& < \frac {3n} {10}+1 \leq  n,
	\end{align*}
	where we tacitly used inside $W$ the estimate $\lfloor n/10\rfloor \geqslant n/11$, valid for all $n\geq 110$. 
	Finally, note that the function 
	$$
	\delta(b) = b W(n/b)
	$$
	has derivative given by the formula
	$$
	\delta'(b) = \frac {W(n/b)^2 } {W(n/b)+1}.
	$$
	If $b\leq n/10$ then $W(n/b)\geq W(10)>1.7$ and so
	$\delta '(b)>1$. 
	This implies that for all $1\leq j\leq s-1$ we have
	$$d_j-d_{j+1} \geq 2$$
	and so in particular $d_{j+1}<d_j$.
\end{proof}

\begin{remark}
	It is well-known that it is possible to get better estimates on Fujita's conjecture if in the inductive process the dimension of the log canonical centers decreases only by one (i.e. if $d_{j+1} = d_j-1$ for some $j$). 
	See for example \cite{fujita2,kawamata1,helmke2,yezhu1,yezhu2} and see also condition (5) in the definition of $U_{s}$ in Section \ref{sectionsixfolds}. 
	In the proof of Theorem \ref{lowerbound}, however, we have constructed an element $(\underline b,\underline d)$ for which $f(\underline b, \underline d,n,1)$ grows as $n\log \log n$ and such that
	$$ d_{j+1}\leq d_j - 2$$
	for all $1\leq j\leq s$. In particular this shows that a study like the one carried out in Section \ref{sectionsixfolds} does not change the asymptotic behavior of $F$.
\end{remark}

\subsection{Other estimates}


We record the following elementary refinement of Lemma  \ref{elementaryestimate} which is enough to give logarithmic bounds on the problem of separation of $r$ points.

\begin{lemma}\label{newelementaryestimate}
	Let $b\leq d\leq n $ and $r$ be positive integers. Then 
	$$\left[ r\binom{n-b}{n-d}\right] ^{1/d} \leq \sqrt[b]r  + e\frac n b - e $$
\end{lemma}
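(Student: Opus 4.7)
The plan is to combine weighted AM--GM with the standard binomial upper bound $\binom{m}{k}\leq (em/k)^{k}$. The idea is to view $[r\binom{n-b}{n-d}]^{1/d}=r^{1/d}\binom{n-b}{d-b}^{1/d}$ as a weighted geometric mean
\[
\bigl(r^{1/b}\bigr)^{b/d}\cdot\bigl(\binom{n-b}{d-b}^{1/(d-b)}\bigr)^{(d-b)/d},
\]
with weights $b/d$ and $(d-b)/d$ that sum to $1$. This splitting is natural because $d=b+(d-b)$, and it is precisely what makes the $r$-part decouple into $r^{1/b}$ while the binomial part decouples into the quantity controlled by the elementary estimate.

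Applying weighted AM--GM then converts this product into the sum $(b/d)\,r^{1/b}+\bigl((d-b)/d\bigr)\binom{n-b}{d-b}^{1/(d-b)}$. For the second summand I would use the estimate $\binom{n-b}{d-b}^{1/(d-b)}\leq e(n-b)/(d-b)$, which is the standard binomial inequality underlying Lemma \ref{elementaryestimate}. This already yields the stronger bound
\[
\bigl[r\tbinom{n-b}{n-d}\bigr]^{1/d}\leq \frac{b}{d}\,r^{1/b}+\frac{e(n-b)}{d}.
\]

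To conclude, since $b\leq d$ one has $b/d\leq 1$ and $1/d\leq 1/b$, so the right-hand side is at most $r^{1/b}+e(n-b)/b=r^{1/b}+en/b-e$, which is the claimed inequality. The only degenerate case is $d=b$, where $\binom{n-b}{d-b}=1$ and the inequality reduces to $r^{1/b}\leq r^{1/b}+e(n-b)/b$, which is immediate from $n\geq b$.

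I do not expect a real obstacle here, since the lemma is essentially a packaging of two elementary ingredients; the only choice to make is the AM--GM weights, and these are forced by the decomposition $d=b+(d-b)$. The bound we actually obtain is tighter than what is claimed, but relaxing to $r^{1/b}+en/b-e$ keeps the statement symmetric in form and clearly exhibits the additive splitting between the $r$-contribution and the $n/b$-contribution.
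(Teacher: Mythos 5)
Your proposal is correct and is essentially the paper's own argument: the weighted AM--GM with weights $b/d$ and $(d-b)/d$ is exactly the Young's inequality step in the paper, and the bound $\binom{n-b}{d-b}^{1/(d-b)}\leq e(n-b)/(d-b)$ is the same Stirling-type estimate used there, followed by the same relaxation $b/d\leq 1$, $1/d\leq 1/b$. Your explicit handling of the degenerate case $d=b$ is a small extra care, but otherwise the two proofs coincide.
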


\begin{proof}
	By the basic version of Stirling's inequality $A!\geqslant (A/e)^A$, we have that 
	$$\binom{n-b}{n-d}^{1/(d-b)}\leq \frac {e(n-b)}{d-b}.$$ 
	Then, using Young's inequality  $A^\lambda B^{1-\lambda} \leq \lambda A + (1-\lambda) B$ we get
	\begin{align*}
		r^{1/d} \binom{n-b}{n-d}^{1/d} 
		& = (\sqrt[b] r)^{b/d} \cdot \left(\binom{n-b}{n-d}^{1/(d-b)}\right)^{1- b /d}\\
		& \leq  \frac b d \sqrt[b] r + \left(1 - \frac b d\right) \binom{n-b}{n-d}^{1/(d-b)}\\
		& \leq  \frac b d \sqrt[b] r + \frac e d (n-b)\\
		& \leq  \frac b b \sqrt[b] r + \frac e b (n-b)\\
		& = \sqrt[b]r  + e\frac n b - e.
	\end{align*}
\end{proof}
\begin{corollary}\label{newelementarycorollary}
	Let $n,r$ be positive integers. Then
	$$
	F(n,r)\leq  e n \log n + \sum _{b=1}^{n} \sqrt[b]r 
	$$
\end{corollary}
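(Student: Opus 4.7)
The plan is to specialize the optimization problem for $F(n,r)$ to the densest possible admissible tuple $(\underline b, \underline d)\in R_{s,n}$, apply Lemma \ref{newelementaryestimate} termwise to the resulting summands, and close the argument with the classical harmonic bound $H_n\leq 1+\log n$.

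First, I would pick $s=n-2$ and the arithmetic chain
\[
(b_{s+1},\ldots,b_1) = (0,1,2,\ldots,n-2), \qquad d_j = b_j + 1 \text{ for } 1\leq j\leq s,
\]
together with $d_0=n$ and $d_{s+1}=0$. This produces the strictly decreasing sequence $0=d_{s+1}<2=d_s<3=d_{s-1}<\cdots<n-1=d_1<n=d_0$, certifying membership in $R_{s,n}$; in the same spirit as the proof of Lemma \ref{lemmalowerbound}, the value $f(\underline b,\underline d,n,r)$ decomposes as a constant boundary piece (of order at most $n\sqrt[n]{r}$, coming from the trivial interval between $b_1=n-2$ and $n$) plus summands of the form $[r\binom{n-b}{n-d}]^{1/d}$ with $d=b+1$ and $b$ ranging over $\{1,\ldots,n-2\}$.

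Next, I would apply Lemma \ref{newelementaryestimate} to each such summand, giving
\[
\left[r\binom{n-b}{n-b-1}\right]^{1/(b+1)} \leq \sqrt[b]{r} + \frac{en}{b} - e,
\]
and absorb the boundary term into the $b=n$ slot. Summing over $b$ from $1$ to $n$ then yields
\[
F(n,r)\leq f(\underline b,\underline d,n,r)\leq \sum_{b=1}^n \sqrt[b]{r} + en\sum_{b=1}^n\frac{1}{b} - en.
\]
Finally, the elementary estimate $\sum_{b=1}^n 1/b \leq 1+\log n$ forces $en H_n - en \leq en\log n$, which delivers the stated inequality.

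The analytic work is already packaged inside Lemma \ref{newelementaryestimate}, so the only real obstacle is the bookkeeping at the two ends of the chain: one must verify that the admissibility conditions of $R_{s,n}$ are met by the tuple above and that the ``trivial'' boundary contribution to $f$ is indeed of size at most $O(\sqrt[n]{r}\cdot n)$, which comfortably fits inside the $b=n$ term $\sqrt[n]{r}+en/n - e$. No delicate inequalities are needed beyond this.
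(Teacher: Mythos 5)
There is a genuine gap here, and it is structural rather than computational. The quantity $F(n,r)$ is the \emph{maximum} of $f(\underline b,\underline d,n,r)$ over all admissible pairs $(\underline b,\underline d)\in R_{s,n}$: this is exactly why, in Theorem \ref{lowerbound} and Proposition \ref{easylowerboundsinr}, exhibiting one well-chosen pair only produces inequalities of the form $F(n,r)\geq f(\underline b,\underline d,n,r)$. Your proposal specializes to one particular chain (the one with $d_j=b_j+1$) and then writes $F(n,r)\leq f(\underline b,\underline d,n,r)$; that inequality goes the wrong way and is not available. A single competitor can never certify an upper bound for a maximum: your chosen chain contributes terms $\bigl[r\binom{n-b}{n-b-1}\bigr]^{1/(b+1)}=[r(n-b)]^{1/(b+1)}$, but nothing prevents some other admissible pair from giving a larger value of $f$, and it is precisely those other pairs that the corollary must control.

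The paper's proof instead takes an \emph{arbitrary} $(\underline b,\underline d)\in R_{s,n}$, uses Lemma \ref{sumbound} to dominate $f(\underline b,\underline d,n,r)$ by a sum $\sum_{b=1}^{n}\bigl[r\binom{n-b}{n-d(b)}\bigr]^{1/d(b)}$ with $b\leq d(b)\leq n$, and only then applies Lemma \ref{newelementaryestimate} termwise, obtaining $\sum_{b=1}^{n}\sqrt[b]r+en\bigl(\sum_{b=1}^{n}\tfrac1b-1\bigr)\leq \sum_{b=1}^{n}\sqrt[b]r+en\log n$. Your termwise use of Lemma \ref{newelementaryestimate} and the harmonic-sum estimate are the same analytic ingredients, so the missing and essential step is the uniform reduction over all admissible pairs supplied by Lemma \ref{sumbound}, which your argument never invokes. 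As a secondary point, your boundary bookkeeping is also off: with $b_1=n-2$ the boundary contribution to $f$ is $n-b_1=2$, not of order $n\sqrt[n]r$; but this is immaterial next to the reversed inequality, which is the step that would fail.
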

\begin{proof}
	Let $(\underline b, \underline d)\in R_{s,n}$. By Lemma \ref{sumbound} and Lemma \ref{newelementaryestimate} we have
	\begin{align*}
		f(\underline b, \underline d, n,r) 
		&\leq   en\left(-1+ \sum _{b=1}^{n} \frac 1 b\right) +\sum _{b=1}^{n} \sqrt[b]r\\
		& \leq  e n \log n + \sum _{b=1}^{n} \sqrt[b]r.
	\end{align*}
\end{proof}

We conclude by including the following simple estimates from below.

\begin{proposition}\label{easylowerboundsinr}
	Let $n,r$ be positive integers. Then
	$$ 
	F(n,r)\geq \max \left\{ \frac {r^{1/n}}{4e} n \log \log n, \sum _{b=1}^{n} \sqrt[b]r \right\}
	$$
\end{proposition}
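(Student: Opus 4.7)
The plan is to prove the two lower bounds separately by exhibiting, in each case, an explicit element $(\underline b, \underline d)\in R_{s,n}$ and invoking $F(n,r)\geq f(\underline b, \underline d, n, r)$.

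For the first bound $F(n,r) \geq \tfrac{r^{1/n}}{4e}\, n \log\log n$, I would reuse the sequence $(\underline b, \underline d)$ constructed in the proof of Theorem~\ref{lowerbound}. Each summand of $f(\underline b, \underline d, n, r)$ takes the form $[r \binom{n-b_j}{n-d_j}]^{1/d_j} = r^{1/d_j}\binom{n-b_j}{n-d_j}^{1/d_j}$, so passing from $r=1$ to general $r\geq 1$ scales the summand by $r^{1/d_j}\geq r^{1/n}$, using the fact that every $d_j$ in that construction satisfies $d_j\leq n$. Pulling this factor out of the sum yields $f(\underline b, \underline d, n, r)\geq r^{1/n}\, f(\underline b, \underline d, n, 1)$, and Lemma~\ref{lemmalowerbound} finishes this case.

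For the second bound $F(n,r)\geq \sum_{b=1}^{n} r^{1/b}$, the idea is to choose a ``degenerate'' admissible chain with $b_j=d_j$ for every $j$, letting the $d_j$ exhaust $\{1,2,\ldots,n\}$ (together with $d_0=n$ and $d_{s+1}=0$). Since $\binom{n-b_j}{n-d_j}=\binom{n-d_j}{0}=1$, each summand collapses to exactly $r^{1/d_j}$, and the sum adds up to precisely $\sum_{b=1}^{n} r^{1/b}$.

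The main point to check is the admissibility of this second choice in $R_{s,n}$: we need a strictly decreasing sequence $d_0=n>d_1>\cdots>d_{s+1}=0$ together with $b_j\leq d_j$, and taking $b_j=d_j$ fits provided equality is permitted. Should the definition of $R_{s,n}$ require strict inequality $b_j<d_j$, the fix is to set $b_j=d_j-1$ instead; then $\binom{n-b_j}{n-d_j}=n-d_j+1\geq 1$, so each summand is still at least $r^{1/d_j}$ and the lower bound $\sum_{b=1}^{n} r^{1/b}$ is preserved.
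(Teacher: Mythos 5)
Your proposal is correct and follows essentially the same route as the paper: the second bound uses exactly the paper's degenerate chain $\underline b=\underline d=(n,n-1,\ldots,1,0)$ (equality $b_j=d_j$ is indeed allowed, so your fallback is unnecessary), and your first bound just unpacks, on the explicit chain of Theorem~\ref{lowerbound}, the paper's one-line observation $F(n,r)\geq r^{1/n}F(n,1)$, since every exponent $d_j\leq n$ makes each term scale by at least $r^{1/n}$.
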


\begin{proof}
	The first lower bound follows from Corollary \ref{lowerbound} and the simple observation that 
	$$ F(n,r)\geq r^{1/n} F(n,1).$$
	To prove the second lower bound, let 
	$$\underline b = \underline d  = (n,n-1,\ldots,1,0)$$
 	Then 
 	$$
 	F(n,r)\geq f(\underline b, \underline d, n,r) = \sum _{b=1}^{n} \sqrt[b]r.
 	$$
\end{proof}

If one fixes $n$ and lets $r$ be large enough, it is possible to compute $F(n,r)$ exactly. 
 
\begin{proposition}
	Fix a positive integer $n$. Then for each large enough positive integer $r$ we have
	$$ F(n,r) = \sum _{b=1}^{n} \sqrt[b]r. $$
\end{proposition}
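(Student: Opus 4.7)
The lower bound $F(n,r) \geq \sum_{b=1}^n \sqrt[b]{r}$ is the second assertion of Proposition \ref{easylowerboundsinr}, attained on the diagonal $\underline b = \underline d = (n, n-1, \ldots, 1, 0)$, so the plan is to prove the matching upper bound for $r$ large. Since $n$ is fixed and $\bigcup_s R_{s,n}$ is finite, it suffices to produce, for each admissible pair $(\underline b, \underline d)$ separately, a threshold $r_0(n, \underline b, \underline d)$ beyond which $f(\underline b, \underline d, n, r) \leq \sum_{b=1}^n \sqrt[b]{r}$, and then take the maximum of the finitely many such thresholds.

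Fix such a pair. Each summand of $f$ factors as $C_j\, r^{1/d_j}$, where $C_j = \binom{n-b_j}{n-d_j}^{1/d_j}$ is independent of $r$ and satisfies $C_j \geq 1$ with equality iff $b_j = d_j$ or $d_j = n$. Hence $f - \sum_{b=1}^n r^{1/b}$ is a finite linear combination of the terms $r^{1/k}$ for $k = 1, \ldots, n$, and its sign for large $r$ is dictated by the coefficient of the smallest $k$ with nonzero coefficient. I would argue that no sign change is possible: either that smallest index $k_0$ lies in $\{1, \ldots, n\} \setminus D$ (where $D = \{d_j\}$), in which case the coefficient is $-1 < 0$ (slack from a term present on the right but absent in $f$); or $k_0 \in D$, the coefficient is $C_j - 1$ for the unique $j$ with $d_j = k_0$, and the combinatorial structure of $R_{s,n}$ should force $C_j = 1$, pushing the analysis to the next index.

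The main obstacle is exactly this combinatorial input: that any $(\underline b, \underline d) \in R_{s,n}$ satisfies $b_j = d_j$ whenever $d_j$ is strictly smaller than every element of $\{1, \ldots, n\} \setminus D$. The delicate case is the exclusion of boundary configurations such as $(b_j, d_j) = (0, k)$ for small $k$, since such a pair would contribute $[r\binom{n}{n-k}]^{1/k}$ with a constant factor $\gg 1$ that no slack could absorb; this is where the precise axioms of $R_{s,n}$ must be invoked, presumably via a dimension-counting or monotonicity property of the sequences analogous to those enforced in the lower-bound construction of Theorem \ref{lowerbound}. Once that point is dispatched, iterating the leading-coefficient analysis produces the uniform threshold $r_0(n)$ and finishes the proof.
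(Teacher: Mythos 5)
Your lower bound and your overall plan (finitely many pairs $(\underline b,\underline d)$ for fixed $n$, a threshold in $r$ for each, take the maximum) are fine, but the upper bound as written has a genuine gap, and you have located it yourself: you need some structural fact about $R_{s,n}$ to rule out the case where the smallest exponent index carrying a nonzero coefficient is some $d_j$ with $\binom{n-b_j}{n-d_j}^{1/d_j}>1$, and you only say that the axioms of $R_{s,n}$ ``should force'' this, ``presumably via a dimension-counting or monotonicity property.'' That unproved claim is exactly the content of the proof, and your proposed formulation of it (that $b_j=d_j$ whenever $d_j$ is smaller than every element of $\{1,\dots,n\}\setminus D$) is neither established nor clearly what the definition of $R_{s,n}$ gives. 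Without it, the leading-coefficient analysis can end with a dominant term $C_j\,r^{1/d_j}$, $C_j>1$, and then $f-\sum_b r^{1/b}$ is positive for all large $r$, so the argument simply does not close. A further soft spot: you implicitly take $f(\underline b,\underline d,n,r)=\sum_j C_j r^{1/d_j}$ indexed only by the $d_j$'s, which does not match how $f$ is handled elsewhere (note the extra term $n-\lfloor n/10\rfloor$ in the proof of Lemma \ref{lemmalowerbound}), so even the bookkeeping of which exponents carry coefficient $-1$ is uncertain.

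The paper avoids all of this by invoking Lemma \ref{sumbound}, the same device already used for Corollary \ref{newelementarycorollary}: for any $(\underline b,\underline d)\in R_{s,n}$ one has $f(\underline b,\underline d,n,r)\leq \sum_{b=1}^{n}\big[r\binom{n-b}{n-d(b)}\big]^{1/d(b)}$ for some choice $b\leq d(b)\leq n$. After this re-indexing the comparison is termwise and trivial: if $d(b)=b$ the binomial coefficient is $1$ and the term equals $r^{1/b}$, while if $d(b)>b$ the fixed constant $\binom{n-b}{n-d(b)}^{1/d(b)}$ is absorbed for large $r$ because $r^{1/b-1/d(b)}\to\infty$; since there are only finitely many pairs $(b,d)$ with $1\leq b\leq d\leq n$, the threshold depends on $n$ alone. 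In other words, the combinatorial input you were missing is not something to be re-derived by a sign analysis of $f-\sum_b r^{1/b}$; it is supplied, in a cleaner form (each $b\in\{1,\dots,n\}$ paired with a single $d(b)\geq b$), by Lemma \ref{sumbound}. Citing that lemma and then running your ``each term is eventually dominated by $r^{1/b}$'' observation is precisely the paper's proof; as it stands, your proposal replaces that step with an unverified structural claim, so it is incomplete.
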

\begin{proof}
	By Proposition \ref{easylowerboundsinr} it is sufficient to prove the inequality $F(n,r) \leq \sum _{b=1}^{n} \sqrt[b]r$. Let  $(\underline b, \underline d)\in R_{s,n}$. By Lemma \ref{sumbound} we have 
	\[
	f(\underline{b},\underline{d},n,r) 
	\leqslant 
	\sum_{b=1} ^n \left[ r\binom{n-b}{n-d(b)}\right] ^{1/d(b)}
	\]
	for some $b\leq d(b)\leq n$. 
	If $r$ is large enough then each term of the sum is dominated, respectively, by a $b$-th root of $r$:
$$	 r^{1/d(b)}\left[ \binom{n-b}{n-d(b)}\right] ^{1/d(b)} 
\leq 
r^{1/b} $$
\end{proof}

\bibliography{biblio}
\bibliographystyle{alpha}

\end{document}